\documentclass[12pt]{article}
\usepackage{amsthm,amsfonts,amsmath,amscd,amssymb,latexsym}
\usepackage{mathrsfs}
\usepackage{epsfig,graphicx,color}
\usepackage[all]{xy}
\usepackage{makeidx}
\usepackage{hyperref}

\title{Remarks on the Donaldson metric}

\author{
Robin~S.~Krom\thanks{Partially supported by the
Swiss National Science Foundation Grant 200021-127136}
\\
ETH Z\"urich
}

\date{18 January 2015}

%
%
\newtheorem{PARA}{}
\newtheorem{theorem}[PARA]{Theorem}
\newtheorem{corollary}[PARA]{Corollary}
\newtheorem{lemma}[PARA]{Lemma}
\newtheorem{proposition}[PARA]{Proposition}
\newtheorem{definition}[PARA]{Definition}
\newtheorem{remark}[PARA]{Remark}

\newcommand{\MAT}[1]{\left[\begin{array}{#1}}
\newcommand{\RIX}{\end{array}\right]}
\newcommand{\p}{\partial}

\newcommand{\R}{{\mathbb R}}

\newcommand{\cH}{{\mathcal H}}

\newcommand{\cL}{{\mathcal L}}

\newcommand{\sE}{\mathscr{E}}

\newcommand{\sH}{\mathscr{H}}

\newcommand{\sS}{\mathscr{S}}

\newcommand{\Om}{{\Omega}}
\newcommand{\om}{{\omega}}

\renewcommand{\phi}{{\varphi}}

\newcommand{\Hhat}{{\widehat{H}}}
\newcommand{\Khat}{{\widehat{K}}}
\newcommand{\rhohat}{{\widehat{\rho}}}

\newcommand{\sigmahat}{{\widehat{\sigma}}}
\newcommand{\tauhat}{{\widehat{\tau}}}

\newcommand{\Xhat}{{\widehat{X}}}
\newcommand{\Zhat}{{\widehat{Z}}}
\newcommand{\Yhat}{{\widehat{Y}}}
\newcommand{\Thetahat}{{\widehat{\Theta}}}

\newcommand{\im}{{\mathrm{im}}}

\newcommand{\grad}{{\mathrm{grad}}}

\newcommand{\Vect}{{\mathrm{Vect}}}

\newcommand{\dvol}{{\rm dvol}}

\newcommand{\inner}[2]{\langle #1, #2\rangle}
\newcommand{\INNER}[2]{\left\langle #1, #2\right\rangle}

\def\NABLA#1{{\mathop{\nabla\kern-.5ex\lower1ex\hbox{$#1$}}}}
\def\Nabla#1{\nabla\kern-.5ex{}_{#1}}
\def\abs#1{\mathopen|#1\mathclose|}
\def\Abs#1{\left|#1\right|}

\renewcommand{\p}{{\partial}}

%


\begin{document}
\maketitle

\begin{abstract}
  The Donaldson metric is a metric on the space of symplectic two-forms in a
  fixed cohomology class. It was introduced in~\cite{KROMSAL}. We compute the
  associated Levi-Civita connection, describe it's geodesics and compute the
  formula for the covariant Hessian of an energy functional on the space of
  symplectic structures in a fixed cohomology class, introduced by
  S.~Donaldson in~\cite{DON1}.
\end{abstract}





Let $M$ be a closed oriented Riemannian four-manifold.
Denote by $g$ the Riemannian metric on $M$,
denote by ${\dvol\in\Om^4(M)}$ the volume form of $g$,
and let $*:\Om^k(M)\to\Om^{4-k}(M)$ be the
Hodge $*$-operator associated to the metric and orientation.
Fix a cohomology class $a\in H^2(M;\R)$
such that $a^2>0$ and consider the space
$$
\sS_a:=\left\{\rho\in\Om^2(M)\,\big|\,
d\rho=0,\,\rho\wedge\rho>0,\,[\rho]=a\right\}
$$
of symplectic forms on $M$ representing the class $a$.
This is an infinite-dimensional manifold and the
tangent space of~$\sS_a$ at any element~${\rho\in\sS_a}$
is the space of exact $2$-forms on $M$.
The next proposition is proved in~\cite{KROMSAL}. It summarizes the properties
of a family of Riemannian metrics $g^\rho$ on~$M$, one for each nondegenerate
$2$-form $\rho$ (and for each fixed background metric $g$). For each
nondegenerate $2$-form define the function $u$ by the equation
\begin{equation}\label{eq:u}
  2 u\dvol = \rho\wedge\rho
\end{equation}

\begin{proposition}[{\bf Symplectic Forms and Riemannian Metrics}]
\label{prop:grho}\

\noindent
Fix a nondegenerate $2$-form $\rho\in\Om^2(M)$
such that $\rho\wedge\rho>0$ and define the
function $u:M\to(0,\infty)$ by~\eqref{eq:u}.
Then there exists a unique Riemannian metric
$g^\rho$ on $M$ that satisfies the following conditions.

\smallskip\noindent{\bf (i)}
The volume form of $g^\rho$ agrees with the volume form of $g$.

\smallskip\noindent{\bf (ii)}
The Hodge $*$-operator $*^\rho:\Om^1(M)\to\Om^3(M)$
associated to $g^\rho$ is given
by
\begin{equation}\label{eq:*rhola}
*^\rho\lambda = \frac{\rho\wedge*(\rho\wedge\lambda)}{u}
\end{equation}
for $\lambda\in\Om^1(M)$ and by
$*^\rho\iota(X)\rho = -\rho\wedge g(X,\cdot)$
for $X\in\Vect(M)$.

\smallskip\noindent{\bf (iii)}
The Hodge $*$-operator $*^\rho:\Om^2(M)\to\Om^2(M)$
associated to $g^\rho$ is given by
\begin{equation}\label{eq:*rhom}
*^\rho\om = R^\rho*R^\rho\om,\qquad
R^\rho\om := \om-\frac{\om\wedge\rho}{\dvol_\rho}\rho,
\end{equation}
for $\om\in\Om^2(M)$. The linear map $R^\rho:\Om^2(M)\to\Om^2(M)$
is an involution that preserves the exterior product,
acts as the identity on the orthogonal complement
of $\rho$ with respect to the exterior product,
and sends $\rho$ to $-\rho$.

\smallskip\noindent{\bf (iv)}
Let $\om\in\Om^2(M)$ be a nondegenerate $2$-form and let
$J:TM\to TM$ be an almost complex structure such that
$g = \om(\cdot,J\cdot)$.  Define the almost complex
structure $J^\rho$ by $\rho(J^\rho\cdot,\cdot):=\rho(\cdot,J\cdot)$
and define the $2$-form $\om^\rho\in\Om^2(M)$ by $\om^\rho:=R^\rho\om$.
Then $g^\rho = \om^\rho(\cdot,J^\rho\cdot)$ and so
$\om^\rho$ is self-dual with respect to $g^\rho$.
\end{proposition}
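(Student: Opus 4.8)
The plan is to treat the statement pointwise: if $g^\rho$ exists it is pinned down by conditions that are algebraic in $g_p$ and $\rho_p$ with denominators controlled by $u>0$, so it suffices to solve the corresponding problem on the oriented Euclidean space $(T_pM,g_p)$ for each $p$, after which smoothness of $g^\rho$ in $p$ is automatic. Throughout I write $\dvol_\rho:=\tfrac12\rho\wedge\rho=u\,\dvol$.

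First I would establish the properties of $R^\rho$ asserted in (iii). Writing $\om\wedge\rho=c(\om)\,\dvol_\rho$ with $c(\om):=(\om\wedge\rho)/\dvol_\rho$, one has $R^\rho\om=\om-c(\om)\rho$, and from $\rho\wedge\rho=2\,\dvol_\rho$ the identities $c(R^\rho\om)=-c(\om)$, $R^\rho\rho=-\rho$, and $R^\rho\om=\om$ whenever $\om\wedge\rho=0$ are immediate; this shows $R^\rho$ is an involution acting as $-\mathrm{id}$ on $\R\rho$ and as $\mathrm{id}$ on its wedge-orthogonal complement. The same bookkeeping, expanding $R^\rho\alpha\wedge R^\rho\beta$ and using $\alpha\wedge\rho=c(\alpha)\dvol_\rho$, gives $R^\rho\alpha\wedge R^\rho\beta=\alpha\wedge\beta$ for all $\alpha,\beta$, so $R^\rho$ preserves the exterior product.

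The structural heart of the argument is uniqueness together with the fact that $*^\rho:=R^\rho*R^\rho$ is a genuine Hodge operator. I would invoke the standard fact that on an oriented four-manifold the Hodge $*$-operator on $2$-forms depends only on the conformal class and determines it, while the conformal class plus a prescribed volume form determines the metric; thus (i) and (iii) fix $g^\rho$ uniquely once we know $*^\rho$ is the $2$-form Hodge operator of some conformal class. Here I use that the exterior product defines a symmetric form $q$ on $\Lambda^2T_p^*M$ of signature $(3,3)$ via $\alpha\wedge\beta=q(\alpha,\beta)\dvol$; since $R^\rho$ preserves the product it lies in $\mathrm{O}(q)$. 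Now $*$ is a $q$-orthogonal involution whose $+1$-eigenspace $\Lambda^+$ is $q$-positive definite and whose $-1$-eigenspace $\Lambda^-$ is $q$-negative definite, and the conjugate $*^\rho$ is again a $q$-orthogonal involution, with eigenspaces $R^\rho\Lambda^\pm$ of the same signatures. Its $+1$-eigenspace is therefore a maximal $q$-positive definite subspace, which determines a unique conformal structure, and imposing (i) selects its unique representative $g^\rho$.

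It remains to produce this metric explicitly and to verify the $1$-form formula (ii) and the compatibility (iv). For existence I would choose an oriented $g$-orthonormal basis bringing $\rho$ into the normal form $\lambda_1e^1\wedge e^2+\lambda_2e^3\wedge e^4$ with $\lambda_1,\lambda_2>0$, so that $u=\lambda_1\lambda_2$ and $\dvol=e^1\wedge e^2\wedge e^3\wedge e^4$; then $R^\rho$, $*^\rho$ and the candidate metric all become diagonal and (i)--(iii) reduce to a finite computation, with the $1$-form identity (ii)---equivalently $*^\rho\iota(X)\rho=-\rho\wedge g(X,\cdot)$, which covers all $1$-forms since $\rho$ is nondegenerate---falling out in the same basis. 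For (iv) the cleanest route is to use uniqueness rather than recompute: for $\om,J$ as in the statement I would check that $J^\rho$ defined by $\rho(J^\rho\cdot,\cdot)=\rho(\cdot,J\cdot)$ satisfies $(J^\rho)^2=-\mathrm{id}$, that $g':=\om^\rho(\cdot,J^\rho\cdot)$ with $\om^\rho=R^\rho\om$ is symmetric and positive definite inducing the orientation $\dvol$ (so $(g',\om^\rho,J^\rho)$ is a compatible triple and $\om^\rho$ is self-dual for $g'$), and that $g'$ satisfies (i) and (iii); then $g'=g^\rho$ by uniqueness. The main obstacle is exactly this last verification: confirming that the single metric furnished abstractly by the conformal-class-plus-volume construction simultaneously realizes the explicit formulas (ii) and (iv), in particular the positive-definiteness of $g'$ and the matching of its $1$-form Hodge operator with $(\rho\wedge*(\rho\wedge\cdot))/u$; both are most transparently handled in the normal-form basis.
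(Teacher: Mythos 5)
Your proposal cannot be compared against an in-paper argument: the paper states explicitly that Proposition~\ref{prop:grho} ``is proved in~\cite{KROMSAL}'' and contains no proof of it, so the only basis for review is the internal correctness of your outline. On that basis your strategy is sound and essentially complete in structure. The bookkeeping for $R^\rho$ is right: with $c(\om)=(\om\wedge\rho)/\dvol_\rho$ one gets $c(\rho)=2$, hence $R^\rho\rho=-\rho$, $c(R^\rho\om)=-c(\om)$, involutivity, and preservation of the wedge product. The structural mechanism also works: the wedge pairing $q$ has signature $(3,3)$, $R^\rho\in\mathrm{O}(q)$, so $*^\rho=R^\rho *R^\rho$ is a $q$-orthogonal involution whose $+1$-eigenspace $R^\rho\Lambda^+$ is maximal positive; by the standard correspondence (Atiyah--Hitchin--Singer, Donaldson--Kronheimer) this determines a unique conformal class inducing the given orientation, and condition (i) selects its unique representative, which yields both existence and uniqueness from (i)+(iii). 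The consistency of the two formulas in (ii) is, as you say, elementary: $\rho\wedge\iota(X)\rho=\tfrac12\iota(X)(\rho\wedge\rho)=u\,\iota(X)\dvol$ together with $*\,g(X,\cdot)=\iota(X)\dvol$ turns the first formula into the second. And the deferred normal-form check does close: with $\rho=\lambda_1e^1\wedge e^2+\lambda_2e^3\wedge e^4$ one finds $g^\rho=\tfrac{\lambda_1}{\lambda_2}\bigl((e^1)^2+(e^2)^2\bigr)+\tfrac{\lambda_2}{\lambda_1}\bigl((e^3)^2+(e^4)^2\bigr)$, for which (i), (ii), (iii) are short computations, and (iv) then follows by your uniqueness argument once $(J^\rho)^2=-\id$ (immediate from the definition) and positivity of $g'=\om^\rho(\cdot,J^\rho\cdot)$ are checked.

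Two points deserve tightening. First, the correspondence between conformal structures and maximal $q$-positive $3$-planes in $\Lambda^2$ is the deepest ingredient and enters as a black box; it should be cited or proved, since without it your existence argument has no content. Second, ``smoothness of $g^\rho$ in $p$ is automatic'' is too quick as stated: the normal-form frame cannot in general be chosen smoothly (the eigenvalues $\lambda_1,\lambda_2$ may cross), so smoothness should instead be extracted either from the smooth dependence of the conformal-class correspondence on the subbundle $R^\rho\Lambda^+$, or, more directly, from the identity implied by (i)--(ii), namely $\langle\iota(X)\rho,\iota(Y)\rho\rangle_{g^\rho}\,\dvol=-\,\iota(X)\rho\wedge\rho\wedge g(Y,\cdot)$, which exhibits the $g^\rho$-inner product on $1$-forms (hence $g^\rho$ itself, by nondegeneracy of $\rho$) as a manifestly smooth expression in $\rho$, $g$ and $u$. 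With those two repairs your outline is a complete and correct proof, independent of whatever argument \cite{KROMSAL} actually uses.
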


The following is the central object of this paper.
\begin{definition}\label{def:DONmet}
Each nondegenerate $2$-form $\rho\in\Om^2(M)$ with $\rho^2>0$
determines an inner product $\inner{\cdot}{\cdot}_\rho$ on the space
of exact $2$-forms defined by
\begin{equation}\label{eq:innerho}
\INNER{\rhohat_1}{\rhohat_2}_\rho
:= \int_M\lambda_1\wedge *^\rho\lambda_2,\qquad
d\lambda_i=\rhohat_i,\qquad *^\rho\lambda_i\in\im\,d.
\end{equation}
These inner products determine a Riemannian metric on the
infinite-di\-men\-si\-onal manifold $\sS_a$ called
the {\bf Donaldson metric}.
\end{definition}
\begin{definition}
A vector field $X_\rhohat$ is {\bf associated} to an exact 2-form $\rhohat \in
T_\rho \sS_a$ if it is the unique vector field satisfing
  \begin{equation}\label{eq:associatedVecField}
    - d \iota(X_\rhohat) \rho = \rhohat, \qquad *^\rho \iota(X_\rhohat) \rho \in \im d.
  \end{equation}
\end{definition}

Every metric has an associated Levi-Civita connection. This is the unique
torsion free and Riemannian connection with respect to the metric. The formula
and computation is the content of the following theorem.
\begin{theorem}[{\bf Levi Civita Connection}]\label{thm:LeviCivita}
  Let $\rho_t:\R \to \sS_a$ be a smooth path of symplectic forms with $\rho :=
  \rho_0$ and $\rhohat := \left.\p_t\right|_{t=0}\rho_t$. Let $X$ be the associated
  vector field of $\rhohat$. Let $Y_t: \R \to \Vect(M)$ be a smooth path of
  vector fields such that $*^\rho \iota(Y_t) \rho_t$ is exact and define
  \begin{equation*}
    \sigmahat_t := -d \iota(Y_t) \rho_t, \quad \sigmahat := \sigmahat_0,
    \quad Y := Y_0.
  \end{equation*}
  The unique Levi-Civita connection associated to the Donaldson metric
  is given by
  \begin{equation}\label{eq:LeviCivita}
    \nabla^\rho_{\rhohat} {\sigmahat} = \left. \frac{d}{dt}\right|_{t=0}
    \sigmahat_t + \frac12d\iota(Y)\rhohat + \frac12
    d\iota(X)\sigmahat - \frac12d\iota\left( \nabla_{Y}X +
    \nabla_{X}Y  \right)\rho.
  \end{equation}
  Here $\nabla_X Y$ denotes the covariant derivative of the
  Levi-Civita connection of the metric $g$ for two vector fields $X$
  and $Y$.
\end{theorem}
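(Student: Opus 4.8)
The plan is to verify that the right-hand side of~\eqref{eq:LeviCivita} defines a torsion-free connection compatible with the Donaldson metric, and then to invoke uniqueness of the Levi-Civita connection.

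The decisive first step is to rewrite the metric in terms of associated vector fields, which turns it into a weighted $L^2$-pairing. If $Y,Z\in\Vect(M)$ are the associated vector fields of $\sigmahat,\tauhat\in T_\rho\sS_a$, then $-\iota(Y)\rho$ is a primitive of $\sigmahat$ whose $*^\rho$-image is exact, and by property~(ii) of Proposition~\ref{prop:grho} one has $*^\rho\big(-\iota(Z)\rho\big)=\rho\wedge g(Z,\cdot)$. Together with the pointwise identities $\iota(Y)\rho\wedge\rho=u\,\iota(Y)\dvol$ and $\iota(Y)\dvol\wedge g(Z,\cdot)=-g(Y,Z)\dvol$ this gives
\begin{equation*}
  \INNER{\sigmahat}{\tauhat}_\rho=\int_M u\,g(Y,Z)\,\dvol .
\end{equation*}
Every later computation will be phrased through this formula.

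I would then dispatch well-definedness and torsion-freeness together. Each term on the right of~\eqref{eq:LeviCivita} is exact, so $\nabla^\rho_{\rhohat}\sigmahat\in T_\rho\sS_a$; and since the only path-dependent term is $\left.\p_t\right|_{t=0}\sigmahat_t=D\sigmahat(\rhohat)$, the expression depends only on $\rhohat$ and the $1$-jet of $\sigmahat$, hence defines a connection. Regarding $\sS_a$ as an open subset of the affine space modelled on exact $2$-forms, the Lie bracket of two vector fields is $[\rhohat,\sigmahat]=D\sigmahat(\rhohat)-D\rhohat(\sigmahat)$. Interchanging $\rhohat$ and $\sigmahat$ in~\eqref{eq:LeviCivita} interchanges their associated vector fields $X$ and $Y$ and leaves the three correction terms $\frac12 d\iota(Y)\rhohat+\frac12 d\iota(X)\sigmahat-\frac12 d\iota(\nabla_YX+\nabla_XY)\rho$ invariant; hence $\nabla^\rho_{\rhohat}\sigmahat-\nabla^\rho_{\sigmahat}\rhohat=D\sigmahat(\rhohat)-D\rhohat(\sigmahat)=[\rhohat,\sigmahat]$, which is torsion-freeness.

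The substance lies in metric compatibility, namely
\begin{equation*}
  \left.\frac{d}{dt}\right|_{t=0}\INNER{\sigmahat_t}{\tauhat_t}_{\rho_t}
  =\INNER{\nabla^\rho_{\rhohat}\sigmahat}{\tauhat}_\rho+\INNER{\sigmahat}{\nabla^\rho_{\rhohat}\tauhat}_\rho,
\end{equation*}
where $\tauhat_t$ is a third path with associated vector fields $Z_t$. Differentiating the weighted $L^2$-formula gives $\int_M\dot u\,g(Y,Z)\,\dvol+\int_M u\big(g(\dot Y,Z)+g(Y,\dot Z)\big)\dvol$, with $\dot u\,\dvol=\rhohat\wedge\rho$. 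On the other side I would evaluate each pairing from~\eqref{eq:innerho}, wedging a primitive of $\nabla^\rho_{\rhohat}\sigmahat$ against $*^\rho\big(-\iota(Z)\rho\big)=\rho\wedge g(Z,\cdot)$. The term $\left.\p_t\right|_{t=0}\sigmahat_t$ has primitive $-\iota(\dot Y)\rho-\iota(Y)\rhohat$, and the same two pointwise identities show that its $-\iota(\dot Y)\rho$ part reproduces exactly $\int_M u\,g(\dot Y,Z)\,\dvol$, and symmetrically for $\dot Z$. It then remains to check that the three correction terms, paired against $\rho\wedge g(Z,\cdot)$ and $\rho\wedge g(Y,\cdot)$, together produce $\int_M\dot u\,g(Y,Z)\,\dvol$ and cancel the leftover pieces coming from the $-\iota(Y)\rhohat$ primitive and its analogue.

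I expect this last matching to be the main obstacle, and it is precisely where the background Levi-Civita connection $\nabla$ must appear. After integrating by parts, using $\nabla g=0$ and Cartan's formula $\cL_X=d\iota(X)+\iota(X)d$, the derivative $\dot u$ together with the non-tensorial parts of the primitives should reorganise into a density of the form $u\,g(\nabla_YX+\nabla_XY,Z)\,\dvol$, which is exactly what the symmetric combination $\nabla_YX+\nabla_XY$ in~\eqref{eq:LeviCivita} is designed to absorb; verifying the signs and the factors $\frac12$ is the one genuinely computational point. Once both properties hold, uniqueness of the Levi-Civita connection identifies the right-hand side of~\eqref{eq:LeviCivita} with $\nabla^\rho_{\rhohat}\sigmahat$.
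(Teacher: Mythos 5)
Everything you commit to is correct, and your route is the same as the paper's: torsion-freeness is dispatched exactly as in the paper (the correction terms are symmetric under interchanging $(\rhohat,X)\leftrightarrow(\sigmahat,Y)$), and metric compatibility is attacked through the identity $*^\rho\iota(Z)\rho=-\rho\wedge\iota(Z)g$ of Proposition~\ref{prop:grho}~(ii) --- your weighted $L^2$ formula $\INNER{\sigmahat}{\tauhat}_\rho=\int_M u\,g(Y,Z)\,\dvol$ is literally the paper's identity $\inner{-d\iota(Y)\rho}{-d\iota(Z)\rho}_\rho=\int_M g(Y,Z)\,\dvol_\rho$. Your reduction is also the right one: after the $\dot Y$ and $\dot Z$ contributions cancel against the corresponding terms of $\left.\p_t\right|_{t=0}\sigmahat_t$ and $\left.\p_t\right|_{t=0}\tauhat_t$, what must be proved is that the correction terms, paired against $\tauhat$ and $\sigmahat$, account for $\int_M \dot u\,g(Y,Z)\,\dvol$ together with the contributions of the leftover primitives $-\iota(Y)\rhohat$ and $-\iota(Z)\rhohat$.

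However, there is a genuine gap: that last step, which you defer as ``the one genuinely computational point,'' is not a routine check of signs and factors of $\frac12$ --- it is essentially the entire proof in the paper, and your sketch stops exactly where the work begins. Carrying it out requires, in the paper's hands: the Koszul-type identity $Zg(X,Y)+Xg(Y,Z)-Yg(X,Z)=g(\nabla_ZX+\nabla_XZ,Y)+g([Y,Z],X)+g([Y,X],Z)$, whose Lie-bracket terms your outline never produces nor disposes of; the conversion of those bracket terms back into wedge expressions via $\cL_{[Y,Z]}=-[\cL_Y,\cL_Z]$, Cartan's formula, and the Leibniz rule for the interior product; and the integration by parts $\int_M Xg(Y,Z)\,\dvol_\rho=-\int_M g(Y,Z)\,(d\iota(X)\rho)\wedge\rho$, which is what ultimately absorbs the $\dot u$ term. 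Appealing to ``$\nabla g=0$ and Cartan's formula'' names the correct tools, but until the bracket terms are generated and cancelled, nothing certifies that the symmetric combination $\nabla_YX+\nabla_XY$ with coefficient $-\frac12$ is the right Christoffel term --- and that is precisely what~\eqref{eq:LeviCivita} asserts. As it stands, your proposal is a correct, well-organized plan that matches the paper's strategy, but with the theorem's actual substance left unverified.
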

\begin{proof}
  The Levi-Civita connection of the Donaldson metric is the unique connection
  that is torsion free and Riemannian with respect to the Donaldson metric.
  Since the Christoffel symbol given by~\eqref{eq:LeviCivita} is symmetric in
  $\rhohat$ and $\sigmahat$, the torsion of the connection $\nabla^\rho$
  vanishes. It remains to show that it is Riemannian. Let $Z_t$ be a smooth path
  of vector fields such that $*^{\rho_t} \iota(Z_t)\rho_t $ is exact. Denote
  $\tauhat_t := -d\iota(Z_t) \rho_t$, $\tauhat := \tauhat_0$. We claim that
  $$
  \left.\frac{d}{dt}\right|_{t=0}\inner{\sigmahat_t}{\tauhat_t}_{\rho_t}
  = \inner{\nabla_\rhohat\sigmahat}{\tauhat}_\rho + \inner{\sigmahat}{\nabla_\rhohat
  \tauhat}_\rho.
  $$
  By the definition of the Donaldson metric and since
  $
  *^\rho \iota(X)\rho = - \rho \wedge \iota(X) g
  $
  by Proposition 3.1 ii) in~\cite{KROMSAL},
  \begin{equation*}
    \inner{\sigmahat_t}{\tauhat_t}_{\rhohat_t} = \int_M
    \left(\iota(Y_t)\rho_t\right) \wedge *^\rho \iota(Z_t)\rho_t = -\int_M
    \left(\iota(Y_t) \rho_t\right) \wedge \rho_t \wedge \iota(Z_t) g
  \end{equation*}
  and
  \begin{equation*}
    \begin{split}
      \inner{\left.\frac{d}{dt}\right|_{t=0}\sigmahat_t}{\tauhat}_\rho
      &=
      -\int_M ( \iota(\Yhat)\rho ) \wedge \rho \wedge \iota(Z)g - \int_M
      ( \iota(Y)\rhohat ) \wedge \rho \wedge \iota(Z)g\\
      \inner{\sigmahat}{\left.\frac{d}{dt}\right|_{t=0}
        \tauhat_t}_\rho
        &=
        -\int_M ( \iota(\Zhat)\rho ) \wedge \rho \wedge \iota(Y)g - \int_M
        ( \iota(Z)\rhohat ) \wedge \rho \wedge \iota(Y)g,
    \end{split}
  \end{equation*}
  where
  $
  \Yhat := \left. \frac{d}{dt}\right|_{t=0} Y_t
  $,
  $
  \Zhat := \left. \frac{d}{dt}\right|_{t=0} Z_t
  $.
  Hence,
  \begin{equation*}
    \begin{split}
      &
      \left. \frac{d}{dt}\right|_{t=0}
      \inner{\sigmahat_t}{\tauhat_t}_{\rhohat_t}\\
      &=
      \inner{\left.\frac{d}{dt}\right|_{t=0}\sigmahat_t}{\tauhat}_\rho -
      \int_M ( \iota(Y) \rho ) \wedge \rhohat \wedge \iota(Z)g - \int_M
      ( \iota(Y)\rho ) \wedge \rho \wedge \iota(\Zhat) g\\
      &=
      \inner{\left.\frac{d}{dt}\right|_{t=0}\sigmahat_t}{\tauhat}_\rho -
      \int_M ( \iota(Y) \rho ) \wedge \rhohat \wedge \iota(Z)g - \int_M
      ( \iota(\Zhat)\rho ) \wedge \rho \wedge \iota(Y) g\\
      &=
      \inner{\left.\frac{d}{dt}\right|_{t=0}\sigmahat_t}{\tauhat}_\rho
      + \inner{\sigmahat}{\left.\frac{d}{dt}\right|_{t=0}\tauhat_t}\\
      &\qquad
      -
      \int_M ( \iota(Y) \rho ) \wedge \rhohat \wedge \iota(Z)g + \int_M
      ( \iota(Z)\rhohat ) \wedge \rho\wedge \iota(Y)g.
    \end{split}
  \end{equation*}
  Define the Christoffel symbols
  $\Gamma_{\sigmahat\tauhat\rhohat}$
  by
  \begin{equation*}
    \begin{split}
      2\Gamma_{\sigmahat\tauhat\rhohat}
      &:=
        ( \iota(Y) \rho ) \wedge ( d\iota(X)\rho ) \wedge \iota(Z)g -
        ( \iota(Z)d\iota(X)\rho ) \wedge \rho\wedge \iota(Y)g\\
      &\qquad
      +( \iota(Y) \rho ) \wedge ( d\iota(Z)\rho ) \wedge \iota(X)g -
        ( \iota(X)d\iota(Z)\rho ) \wedge \rho\wedge \iota(Y)g\\
      &\qquad
      -( \iota(Z) \rho ) \wedge ( d\iota(Y)\rho ) \wedge \iota(X)g +
        ( \iota(X)d\iota(Y)\rho ) \wedge \rho\wedge \iota(Z)g.
    \end{split}
  \end{equation*}
  Then
  \begin{equation*}
    \begin{split}
      \Gamma_{\sigmahat\tauhat\rhohat} + \Gamma_{\tauhat\sigmahat\rhohat}
      &=
      ( \iota(Y) \rho ) \wedge d\iota(X)\rho \wedge \iota(Z)g -
      ( \iota(Z)d\iota(X)\rho ) \wedge \rho\wedge \iota(Y)g\\
      &=
      ( -\iota(Y) \rho ) \wedge \rhohat \wedge \iota(Z)g +
      ( \iota(Z)\rhohat ) \wedge \rho\wedge \iota(Y)g.
    \end{split}
  \end{equation*}
  Hence, it remains to show that
  $$
  \int_M \Gamma_{\sigmahat\tauhat\rhohat} =
  \inner{\sigmahat}{\frac12 d\iota(Z)\rhohat + \frac12
  d\iota(X)\tauhat - \frac12 d\iota(\nabla_Z X + \nabla_X
Z)\rho}_\rho.
  $$
  Let
  \begin{equation*}
    \begin{split}
      A
      &:=
      ( -\iota(Z)d\iota(X)\rho ) \wedge \rho\wedge \iota(Y)g - (
      \iota(X)d\iota(Z)\rho ) \wedge \rho\wedge \iota(Y)g\\
      B
      &:=
      2\Gamma_{\sigmahat \tauhat \rhohat} - A\\
      &=
      ( \iota(Y) \rho ) \wedge ( d\iota(X)\rho ) \wedge \iota(Z)g
      +( \iota(Y) \rho ) \wedge ( d\iota(Z)\rho ) \wedge \iota(X)g\\
      &\qquad
      -( \iota(Z) \rho ) \wedge ( d\iota(Y)\rho ) \wedge \iota(X)g +
        ( \iota(X)d\iota(Y)\rho ) \wedge \rho\wedge \iota(Z)g.
    \end{split}
  \end{equation*}
  Since
  $
  *^\rho \iota(X)\rho = -\rho \wedge \iota(X) g
  $
  for any vector field $X$,
  \begin{equation*}
    \begin{split}
      \int_M A
      &=
      - \int_M \iota(Z)d\iota(X)\rho \wedge \rho \wedge \iota(Y) g - \int_M \iota(X) d\iota(Z) \rho \wedge \rho \wedge \iota(Y) g\\
      &\qquad =
      \inner{\sigmahat}{d\iota(Z)\rhohat + d \iota(X) \tauhat}_\rho.
    \end{split}
  \end{equation*}
  Since
  \begin{multline*}
    Zg(X,Y) + Xg(Y,Z) - Yg(X,Z)\\
    =
    g(\nabla_ZX + \nabla_XZ, Y) + g([Y,Z],X) + g([Y,X],Z)
  \end{multline*}
  (see our sign convention for the Lie bracket), we have
  \begin{equation*}
    \begin{split}
      \inner{\sigmahat}{-d\iota(\nabla_Z X + \nabla_X Z) \rho}_\rho
      &=
      \int_M g(Y, \nabla_Z X + \nabla_X Z) \dvol_\rho\\
      &=
      \int_M \big(Zg(X,Y) + Xg(Y,Z) - Yg(X,Z)\\
      &
      \qquad \qquad - g([Y,Z],X) - g([Y,X],Z)\big) \dvol_\rho.
    \end{split}
  \end{equation*}
  Here we used that
  $
  \inner{-d\iota(X) \rho}{-d\iota(Y) \rho}_\rho = \int_M g(X,Y) \dvol_\rho
  $
  for any vector field $X$ and a vector field $Y$ such that $*^\rho\iota(Y)\rho$
  is exact. Then
  \begin{equation*}
    \begin{split}
      &-\int_M g([Y,Z],X) + g([Y,X],Z)\dvol_\rho\\
      &=
      -\inner{d\iota([Y,Z])\rho}{d\iota(X)\rho}_\rho -
      \inner{d\iota([Y,X])\rho}{d\iota(Z)\rho}_\rho\\
      &=
      -\inner{\cL_{[Y,Z]}\rho}{d\iota(X)\rho}_\rho -
      \inner{\cL_{[Y,X]}\rho}{d\iota(Z)\rho}_\rho\\
      &=
      \inner{[\cL_Y,\cL_Z]\rho}{d\iota(X)\rho}_\rho +
      \inner{[\cL_Y,\cL_X]\rho}{d\iota(Z)\rho}_\rho\\
      &=
      -\int_M ( \iota(Y)d\iota(Z)\rho ) \wedge \rho \wedge \iota(X) g + \int_M (
      \iota(Z)d\iota(Y) \rho ) \wedge \rho \wedge \iota(X) g\\
      &\qquad
      - \int_M ( \iota(Y)d\iota(X)\rho ) \wedge\rho\wedge \iota(Z)g + \int_M (
      \iota(X)d\iota(Y)\rho ) \wedge \rho \wedge \iota(Z) g.
    \end{split}
  \end{equation*}
  Here we used the identity $\cL_{[X,Y]} = - [\cL_X, \cL_Y]$ for all vector
  fields $X,Y$ in the third equality.  Using the Leibniz rule for the interior
  product for the first three terms yields,
  \begin{equation*}
    \begin{split}
      &-\int_M g([Y,Z],X) + g([Y,X],Z)\dvol_\rho\\
      &=
      \int_M ( d\iota(Z)\rho ) \wedge ( \iota(Y)\rho ) \wedge \iota(X) g +\int_M g(X,Y) ( d\iota(Z)\rho ) \wedge \rho\\
      &\qquad
      - \int_M ( d\iota(Y) \rho ) \wedge ( \iota(Z) \rho ) \wedge \iota(X) g- \int_M g(X,Z) ( d\iota(Y) \rho ) \wedge \rho\\
      &\qquad
      + \int_M ( d\iota(X)\rho ) \wedge ( \iota(Y)\rho )\wedge \iota(Z)g + \int_M g(Y,Z) ( d\iota(X)\rho ) \wedge\rho\\
      &\qquad
      + \int_M ( \iota(X)d\iota(Y)\rho ) \wedge \rho \wedge \iota(Z) g.
    \end{split}
  \end{equation*}
  Since
  \begin{equation*}
    \begin{split}
      \int_M X g(Y,Z) \dvol_\rho &= \int_M (\iota(X) d g(Y,Z)) \dvol_\rho\\
      &=
      \int_M (dg(Y,Z)) ( \iota(X) \rho ) \wedge \rho\\
      &=
      - \int_M g(Y,Z) ( d\iota(X) \rho ) \wedge \rho
    \end{split}
  \end{equation*}
  for all vector fields $X, Y,Z$ we find
  \begin{equation*}
    \begin{split}
      &\inner{\sigmahat}{-d\iota(\nabla_Z X + \nabla_X Z) \rho}_\rho\\
      &=
      \int_M ( d\iota(Z)\rho ) \wedge ( \iota(Y)\rho ) \wedge \iota(X) g
      - \int_M ( d\iota(Y) \rho ) \wedge ( \iota(Z) \rho ) \wedge \iota(X) g\\
      &\qquad
      + \int_M ( d\iota(X)\rho ) \wedge ( \iota(Y)\rho ) \wedge \iota(Z)g
      + \int_M ( \iota(X)d\iota(Y)\rho ) \wedge \rho \wedge \iota(Z) g\\
      &=
      \int_M B.
   \end{split}
  \end{equation*}
  Hence
  $$
  \inner{\sigmahat}{\frac12 d\iota(Z)\rhohat + \frac12 d\iota(X)\tauhat -
  \frac12 d\iota(\nabla_Z X + \nabla_XZ)\rho}_\rho = \frac12 \int_M A + B =
  \Gamma_{\sigmahat\tauhat\rhohat}.
  $$
  This proves the claim and the theorem.
\end{proof}

The following is an immidiate corollary.
\begin{corollary}[{\bf Geodesic Equation}]\label{cor:geodesic}
  The geodesic equation on the space $\sS_a$ with respect to the
  Donaldson metric is
  \begin{equation}\label{eq:geodesic}
    \frac{d^2}{dt^2} \rho_t = d\iota(X_t)d\iota(X_t)\rho_t +
    d\iota(\nabla_{X_t}X_t) \rho_t,
  \end{equation}
  where $X_t$ is the associated vector field of $\p_t \rho_t$.
\end{corollary}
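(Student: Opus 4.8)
The plan is to obtain the geodesic equation by specializing the Levi-Civita formula~\eqref{eq:LeviCivita} to the velocity field of the path itself and imposing that the covariant acceleration vanish. A curve $\rho_t$ is a geodesic precisely when $\nabla^\rho_{\p_t\rho_t}(\p_t\rho_t)=0$, so the entire computation reduces to evaluating the right-hand side of~\eqref{eq:LeviCivita} in the special case in which the two tangent vectors both coincide with the velocity.

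Concretely, I would take $\rhohat := \p_t\rho_t$ with associated vector field $X = X_t$, and then choose the path of vector fields in the theorem to be this same associated vector field, i.e. $Y_t := X_t$ and $\sigmahat_t := -d\iota(X_t)\rho_t = \p_t\rho_t$. This is a legitimate choice because, by the definition of the associated vector field, $*^{\rho_t}\iota(X_t)\rho_t$ is exact, which is exactly the hypothesis required of $Y_t$ in Theorem~\ref{thm:LeviCivita}. With these identifications one has $\rhohat=\sigmahat=\p_t\rho_t$ and $X=Y$ at the base point.

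Substituting into~\eqref{eq:LeviCivita}, the first term $\left.\frac{d}{dt}\right|_{t=0}\sigmahat_t$ becomes the ordinary second derivative $\frac{d^2}{dt^2}\rho_t$; the two middle terms $\frac12 d\iota(Y)\rhohat+\frac12 d\iota(X)\sigmahat$ collapse to the single term $d\iota(X)\p_t\rho$; and, since $\nabla_Y X+\nabla_X Y = 2\nabla_X X$, the last term becomes $-d\iota(\nabla_X X)\rho$. I would then use the defining relation $\p_t\rho=-d\iota(X)\rho$ to rewrite $d\iota(X)\p_t\rho=-d\iota(X)d\iota(X)\rho$. Setting the resulting expression for $\nabla^\rho_{\p_t\rho_t}(\p_t\rho_t)$ equal to zero yields exactly~\eqref{eq:geodesic}. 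Since one may recenter the time parameter at any $t_0$, the identity holds for all $t$, not merely at the base point.

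The computation is essentially bookkeeping, so there is no serious analytic obstacle. The one point that deserves care is the legitimacy of setting $Y_t=X_t$: I must verify that computing the covariant derivative of the velocity vector field along the curve is correctly modelled by taking $\sigmahat_t$ to be the velocity at time $t$ (so that $\left.\frac{d}{dt}\right|_{t=0}\sigmahat_t$ genuinely produces the second time-derivative), and that the exactness condition on $*^{\rho_t}\iota(X_t)\rho_t$ holds along the whole path rather than only at $t=0$. Both are immediate from the definitions, after which the factor-of-$\tfrac12$ terms combine and the sign coming from $\p_t\rho=-d\iota(X)\rho$ produces the quadratic term $d\iota(X)d\iota(X)\rho$.
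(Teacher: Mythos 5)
Your proposal is correct and follows exactly the route the paper intends: the corollary is stated as an immediate consequence of Theorem~\ref{thm:LeviCivita}, obtained by taking $Y_t = X_t$ (so $\sigmahat_t = \p_t\rho_t$), whereupon the two middle terms combine to $d\iota(X)\p_t\rho = -d\iota(X)d\iota(X)\rho$, the last term becomes $-d\iota(\nabla_X X)\rho$, and setting $\nabla^\rho_{\p_t\rho_t}\p_t\rho_t = 0$ gives~\eqref{eq:geodesic}. Your care about the exactness condition $*^{\rho_t}\iota(X_t)\rho_t \in \im\, d$ holding along the whole path (which is automatic from the definition of the associated vector field) and about recentering the time parameter is exactly the right bookkeeping.
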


The next lemma gives an alternative forumula for the covariant derivative.
\begin{lemma}\label{lem:levicivita2}
  Let $\rho_t:\R \to \sS_a$ be a smooth path of symplectic forms with $\rho :=
  \rho_0$ and $\rhohat := \left.\p_t\right|_{t=0}\rho_t$. Let $X$ be the associated
  vector field of $\rhohat$. Let $Y_t: \R \to \Vect(M)$ be a smooth path of
  vector fields such that $*^\rho \iota(Y_t) \rho_t$ is exact and define
  \begin{equation*}
    \sigmahat_t := -d \iota(Y_t) \rho_t, \quad \sigmahat := \sigmahat_0,
    \quad Y := Y_0.
  \end{equation*}
  Then
  \begin{equation*}
    \nabla_{\rhohat}^\rho \sigmahat = - d\iota(\Yhat + \nabla_X Y)\rho, \qquad
    \Yhat := \left.\frac{d}{dt}\right|_{t=0} Y_t.
  \end{equation*}
\end{lemma}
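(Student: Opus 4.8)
The plan is to read the formula off directly from Theorem~\ref{thm:LeviCivita} rather than to repeat the variational argument, by substituting the definitions of $\sigmahat_t$ and of the associated vector field and collecting terms. First I would differentiate $\sigmahat_t = -d\iota(Y_t)\rho_t$ at $t=0$ with the product rule, which gives $\left.\frac{d}{dt}\right|_{t=0}\sigmahat_t = -d\iota(\Yhat)\rho - d\iota(Y)\rhohat$. Inserting this into~\eqref{eq:LeviCivita} and combining the two terms that carry a factor $d\iota(Y)\rhohat$ yields
\[
  \nabla^\rho_{\rhohat}\sigmahat = -d\iota(\Yhat)\rho - \tfrac12 d\iota(Y)\rhohat + \tfrac12 d\iota(X)\sigmahat - \tfrac12 d\iota(\nabla_Y X + \nabla_X Y)\rho .
\]
Since the claimed answer is $-d\iota(\Yhat + \nabla_X Y)\rho = -d\iota(\Yhat)\rho - d\iota(\nabla_X Y)\rho$, it then suffices to prove that the last three terms sum to $-d\iota(\nabla_X Y)\rho$.

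Next I would eliminate $\rhohat$ and $\sigmahat$ in favour of $X$ and $Y$ using the defining equation~\eqref{eq:associatedVecField}, namely $\rhohat = -d\iota(X)\rho$, together with $\sigmahat = -d\iota(Y)\rho$. This rewrites $-\tfrac12 d\iota(Y)\rhohat + \tfrac12 d\iota(X)\sigmahat$ as $\tfrac12\bigl(d\iota(Y)d\iota(X)\rho - d\iota(X)d\iota(Y)\rho\bigr)$. Because $\rho$ is closed, Cartan's formula gives $d\iota(X)\rho = \cL_X\rho$, and $\cL_X\rho$ is again closed, so $d\iota(Y)d\iota(X)\rho = \cL_Y\cL_X\rho$ and symmetrically $d\iota(X)d\iota(Y)\rho = \cL_X\cL_Y\rho$. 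The combination therefore collapses to $\tfrac12[\cL_Y,\cL_X]\rho$.

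Finally I would invoke the bracket identity $\cL_{[X,Y]} = -[\cL_X,\cL_Y]$ already used in the proof of Theorem~\ref{thm:LeviCivita}, which gives $[\cL_Y,\cL_X]\rho = \cL_{[X,Y]}\rho = d\iota([X,Y])\rho$ (using $d\rho=0$ once more), together with the torsion-freeness of the background Levi-Civita connection, which in the paper's Lie-bracket convention reads $[X,Y] = \nabla_Y X - \nabla_X Y$. Substituting these, the three remaining terms become $\tfrac12 d\iota(\nabla_Y X - \nabla_X Y)\rho - \tfrac12 d\iota(\nabla_Y X + \nabla_X Y)\rho = -d\iota(\nabla_X Y)\rho$, which is exactly what is required, so that $\nabla^\rho_{\rhohat}\sigmahat = -d\iota(\Yhat + \nabla_X Y)\rho$. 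The one genuinely delicate point is the sign bookkeeping: the cancellation of the $\nabla_Y X$ terms depends on using the same nonstandard bracket convention in both the identity $\cL_{[X,Y]} = -[\cL_X,\cL_Y]$ and the torsion-free relation $[X,Y] = \nabla_Y X - \nabla_X Y$, and an inconsistency there would spoil the result.
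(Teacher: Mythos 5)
Your proposal is correct and follows essentially the same route as the paper's own proof: differentiate $\sigmahat_t$ by the product rule, substitute into~\eqref{eq:LeviCivita}, convert $-d\iota(Y)\rhohat + d\iota(X)\sigmahat$ into $[\cL_Y,\cL_X]\rho$ via Cartan's formula, and finish with the bracket identity $\cL_{[X,Y]} = -[\cL_X,\cL_Y]$ together with torsion-freeness of $\nabla$ in the paper's bracket convention. Your explicit flagging of the sign-convention consistency is a sound observation, but the argument itself is the paper's argument.
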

\begin{proof}
  We have
  \begin{equation*}
    \begin{split}
      \nabla^\rho_\rhohat \sigmahat
      &=
      -\left. \frac{d}{dt} \right|_{t=0}
      d\iota(Y_t)\rho_t
      + \frac12 d\iota(Y)\rhohat + \frac12 d \iota(X)
      \sigmahat
      - \frac12 d \iota(\nabla_{Y}X + \nabla_X
      Y)\rho\\
      &=
      -d\iota(\Yhat)\rho - d\iota(Y)\rhohat + \frac12
      d\iota(Y)\rhohat
      + \frac12 d \iota(X) \sigmahat
      - \frac12 d \iota(\nabla_{Y}X + \nabla_X
      Y)\rho.
    \end{split}
  \end{equation*}
  Using the identity
  $
  \cL_{[X,Y]} = -[\cL_X,\cL_Y]$ and Cartan's formula for the Lie derivative we compute
  \begin{equation*}
    \begin{split}
      -2d\iota(Y)\rhohat
      + d\iota(Y)\rhohat &+ d \iota(X)
      \sigmahat
      - d \iota(\nabla_{Y}X + \nabla_X
      Y)\rho\\
      &\qquad=
      -d\iota(Y)\rhohat + d \iota(X)
      \sigmahat - d \iota(\nabla_{Y}X + \nabla_X
      Y)\rho\\
      &\qquad=
      \cL_{Y}\cL_X \rho - \cL_X
      \cL_{Y}\rho - d \iota(\nabla_{Y}X + \nabla_X
      Y)\rho\\
      &\qquad=
      -\cL_{[Y, X]}\rho - d \iota(\nabla_{Y}X + \nabla_X
      Y)\rho\\
      &\qquad=
      -2d\iota(\nabla_X{Y})\rho.
    \end{split}
  \end{equation*}
  Hence,
  \begin{equation*}
    \nabla_{\rhohat}^\rho \sigmahat = - d\iota(\Yhat + \nabla_X Y)\rho.
  \end{equation*}
  This proves the lemma.
\end{proof}


S.~Donaldson introduced the following energy functional on the space of
symplectic structures in a fixed cohomology class in \cite{DON1},
$$
\sE : \sS_a \to \R, \qquad
\sE(\rho) := \int_M \frac{2 \abs{\rho^+}^2}{\abs{\rho^+}^2 - \abs{\rho^-}^2}\dvol.
$$
The functional and the corresponding negative gradient flow with respect to
the Donaldson metric are further studied in \cite{KROMSAL} and \cite{KROM}.
It is shown in \cite{KROMSAL} that the gradient of $\sE$ with respect to the
Donaldson metric is the operator
\begin{align*}
  \grad\sE &: \sS_a \to T_\rho \sS_a\\
  \rho &\mapsto -d*^\rho d\Theta^\rho,
\end{align*}
where
\begin{align*}
  \Theta^\rho := * \frac{\rho}{u} - \frac12 \Abs{\frac{\rho}{u}}^2 \rho.
\end{align*}
We compute its associated vector field.
\begin{lemma}\label{lem:XgradE}
  The associated vector field $X_{\grad\sE}$ of $\grad\sE(\rho)$ is given by the
  two equivalent equations
  \begin{equation}\label{eq:XgradE}
    *^\rho d\Theta^\rho = \iota(X_{\grad\sE})\rho \iff d\Theta^\rho = \rho \wedge \iota\left( X_{\grad\sE} \right)g.
  \end{equation}
  In the hyperK\"ahler case,
  $$
  X_{\grad\sE} = -\sum_{i=1}^3 J_i X_{K_i},
  $$
  where
  $K_i :=
  \frac{\om_i\wedge\rho}{\dvol_\rho}$ and $X_{K_i}$ is the Hamiltonian vector
  field of $K_i$ with respect to the symplectic structure $\rho$.
\end{lemma}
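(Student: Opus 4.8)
The plan is to argue in two stages: first establish the two equivalent characterisations of $X_{\grad\sE}$ for a general $\rho$, and then evaluate them in the hyperK\"ahler case. To begin, I would check that the vector field determined by $*^\rho d\Theta^\rho = \iota(X_{\grad\sE})\rho$ is well defined — nondegeneracy of $\rho$ makes $X \mapsto \iota(X)\rho$ an isomorphism onto $\Om^1(M)$ — and that it is the associated vector field of $\grad\sE(\rho)$ in the sense of~\eqref{eq:associatedVecField}. The first defining condition is immediate, $-d\iota(X_{\grad\sE})\rho = -d*^\rho d\Theta^\rho = \grad\sE(\rho)$; the second follows from $*^\rho\iota(X_{\grad\sE})\rho = *^\rho *^\rho d\Theta^\rho = -d\Theta^\rho \in \im d$, where I use that $*^\rho*^\rho = -\id$ on three-forms of a four-manifold. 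The equivalence of the two displayed equations is then obtained by applying $*^\rho$ to the first and invoking Proposition~\ref{prop:grho}~(ii), $*^\rho\iota(X)\rho = -\rho\wedge\iota(X)g$: this turns $-d\Theta^\rho = *^\rho\iota(X_{\grad\sE})\rho$ into $d\Theta^\rho = \rho\wedge\iota(X_{\grad\sE})g$, and invertibility of $*^\rho$ gives both directions.

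For the hyperK\"ahler statement I would first put $\Theta^\rho$ in closed form. Decomposing $\rho = \rho^+ + \rho^-$ and expanding the self-dual part $\rho^+ = \sum_i a_i\om_i$ in the orthogonal triple $\om_1,\om_2,\om_3$ (normalised by $\om_i\wedge\om_j = 2\delta_{ij}\dvol$), the relation $K_i = \om_i\wedge\rho/\dvol_\rho$ together with $\dvol_\rho = u\dvol$ gives $a_i = uK_i/2$, hence $\rho^+ = \tfrac u2\sum_i K_i\om_i$ and $\tfrac{2}{u}\rho^+ = \sum_i K_i\om_i$. Using $*\rho = 2\rho^+ - \rho$ and $\rho\wedge*\rho = (2\abs{\rho^+}^2 - 2u)\dvol$ to evaluate the two terms of $\Theta^\rho = *\tfrac\rho u - \tfrac12\Abs{\tfrac\rho u}^2\rho$, the $\tfrac\rho u$ contributions cancel and everything collapses to $\Theta^\rho = \sum_i K_i\om_i - \tfrac12\abs{K}^2\rho$, with $\abs{K}^2 = \sum_i K_i^2$. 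Since the hyperK\"ahler forms are parallel, $d\om_i = 0$ and $d\rho = 0$, so differentiating yields $d\Theta^\rho = \sum_i dK_i\wedge\om_i - \sum_i K_i\,dK_i\wedge\rho$.

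It then remains to match this against $\rho\wedge\iota(X_{\grad\sE})g$. Because $\om_i = g(J_i\cdot,\cdot)$ gives $\iota(J_iX)g = \iota(X)\om_i$, the proposed field yields $\rho\wedge\iota(X_{\grad\sE})g = -\sum_i\rho\wedge\iota(X_{K_i})\om_i$. I would expand each summand by the Leibniz rule for $\iota(X_{K_i})$ applied to $\rho\wedge\om_i = uK_i\dvol$, using the Hamiltonian relation $\iota(X_{K_i})\rho = dK_i$, to obtain $\rho\wedge\iota(X_{K_i})\om_i = uK_i\,\iota(X_{K_i})\dvol - dK_i\wedge\om_i$. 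The crux is the pointwise identity $u\,\iota(X_{K_i})\dvol = dK_i\wedge\rho$, which follows from $\iota(X_{K_i})(\rho\wedge\rho) = 2\,dK_i\wedge\rho$ and $\rho\wedge\rho = 2u\dvol$. Substituting it collapses the sum to $-\sum_i\rho\wedge\iota(X_{K_i})\om_i = \sum_i dK_i\wedge\om_i - \sum_i K_i\,dK_i\wedge\rho = d\Theta^\rho$, and the first stage then identifies $-\sum_i J_iX_{K_i}$ as the associated vector field $X_{\grad\sE}$.

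The main obstacle is this final matching: obtaining the closed form $\Theta^\rho = \sum_i K_i\om_i - \tfrac12\abs{K}^2\rho$ and then recognising that the Hamiltonian identity $u\,\iota(X_{K_i})\dvol = dK_i\wedge\rho$ is precisely the mechanism that cancels the quadratic term $\sum_i K_i\,dK_i\wedge\rho$ against the volume contribution. One must also keep the two volume forms apart — the common Riemannian volume $\dvol$ of $g$ and $g^\rho$ versus the symplectic volume $\dvol_\rho = u\dvol$ entering $K_i$ — and fix the Hamiltonian sign convention $\iota(X_{K_i})\rho = dK_i$; the opposite convention merely flips signs uniformly and does not affect the argument.
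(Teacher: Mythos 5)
Your proof is correct, and it splits naturally into a half that coincides with the paper and a half that genuinely departs from it. The first half --- checking that $*^\rho d\Theta^\rho = \iota(X_{\grad\sE})\rho$ produces the associated vector field of $\grad\sE(\rho) = -d*^\rho d\Theta^\rho$, and passing between the two equations via $*^\rho\iota(X)\rho = -\rho\wedge\iota(X)g$ --- is exactly the paper's argument; you merely supply the detail the paper calls ``immediate,'' namely that $*^\rho*^\rho = -\id$ on odd-degree forms of a four-manifold is what makes $*^\rho\iota(X_{\grad\sE})\rho = -d\Theta^\rho$ exact. The hyperK\"ahler half is where you diverge: the paper disposes of it in one line by importing the identity $d\Theta^\rho = *^\rho\sum_i\rho(J_iX_{K_i},\cdot)$ from \cite{KROMSAL}, whereas you reprove that input from scratch. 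Your derivation is sound at every step: the closed form $\Theta^\rho = \sum_iK_i\om_i - \tfrac12\bigl(\sum_iK_i^2\bigr)\rho$ follows since $|\rho^+|^2 = \tfrac{u^2}{2}\sum_iK_i^2$ and $|\rho|^2 = 2|\rho^+|^2-2u$ give $\tfrac{|\rho|^2}{2u^2} = \tfrac12\sum_iK_i^2 - \tfrac1u$, so the $\rho/u$ terms indeed cancel; and in the matching step the identity $u\,\iota(X_{K_i})\dvol = dK_i\wedge\rho$ (from contracting $\rho\wedge\rho = 2u\dvol$) is precisely what collapses $-\sum_i\rho\wedge\iota(X_{K_i})\om_i$ to $\sum_i dK_i\wedge\om_i - \sum_iK_i\,dK_i\wedge\rho = d\Theta^\rho$. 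What your route buys is self-containedness where the paper leans on the companion reference; what it costs is having to fix conventions the paper delegates to \cite{KROMSAL}, namely the normalization $\om_i\wedge\om_j = 2\delta_{ij}\dvol$ and the Hamiltonian sign. On the latter your closing remark is slightly too casual: with the opposite convention $\iota(X_{K_i})\rho = -dK_i$ the argument still runs, but it yields $X_{\grad\sE} = +\sum_iJ_iX_{K_i}$, so the lemma as stated (with the minus sign) forces the convention $\iota(X_{K_i})\rho = dK_i$; the choice affects the conclusion's sign and must match the one used in \cite{KROMSAL}, it is not a free parameter of the proof.
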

\begin{proof}
  It is immediate that a vector field $X_{\grad\sE}$ defined by the first
  equation of~\eqref{eq:XgradE} satisfies the two
  conditions~\eqref{eq:associatedVecField} for $\rhohat = \grad\sE(\rho) =
  -d*^\rho d\Theta^\rho$. That the second equation is equivalent to the first
  follows from the identity
  $
  *^\rho \iota\left( X \right)\rho = -\rho \wedge \iota\left( X \right)g
  $
  proved in~\cite{KROMSAL}. In the hyperK\"ahler case it is shown in
  \cite{KROMSAL} that
  $
  d\Theta^\rho = *^\rho \sum_i \rho(J_i X_{K_i},\cdot)
  $.
  Hence it follows from the first equation in~\eqref{eq:XgradE} that
  $\rho(X_{\grad\sE},\cdot) = -\rho(\sum_i J_i X_{K_i}, \cdot)$. This proves the
  lemma.
\end{proof}

The Hessian operator of the energy functional $\sE$ is the operator $\cH: T_\rho
\sS_a \to T_\rho \sS_a$ defined by
$$
\cH_\rho \rhohat := \nabla_\rhohat^\rho \grad\sE(\rho).
$$
Associated to this operator is the Hessian quadratic form $\sH_\rho :T_\rho \sS_a \to
\R$ given by
$$
\sH_\rho (\rhohat) := \inner{\cH_\rho \rhohat}{\rhohat}_\rho.
$$
Since $\nabla^\rho$ is the Levi-Civita connection of the Donaldson metric, the
Hessian quadratic form equals $\left.\frac{d^2}{dt^2}\right|_{t=0} \sE(\rho_t)$
for a curve $\R \to \sS_a$ : $t \to \rho_t$ satisfying $\rho_0 = \rho$,
$\left.\frac{d}{dt}\right|_{t=0} = \rhohat$ and
$\left.\frac{d^2}{dt^2}\right|_{t=0}\rho_t = 0$.

\begin{theorem}[{\bf Covariant Hessian}]\label{thm:covhess}
  Let
  $
  \rho \in \sS_a
  $.
  Then the following holds.

  \smallskip\noindent{\bf (i)}
  The Hessian operator of the energy functional
  $
  \sE:\sS_a \to \R
  $
  is the linear operator
  \begin{equation}\label{def:hessop}
      \cH_\rho \rhohat
      =
      -d*^\rho d\Thetahat
      + d*^\rho
      \left(
      \rhohat \wedge \iota(X_{\grad\sE}) g
      \right)-
      d\iota\left( \nabla_X X_{\grad\sE} \right)\rho,
  \end{equation}
  where
  $
  \Thetahat := \frac{\rhohat + *^\rho \rhohat}{u} - \Abs{\frac{\rho^+}{u}}^2
  \rhohat,
  $
  $\nabla$ denotes the Levi-Civita connection of the metric
  $g$ and
  $
  X,X_{\grad\sE}
  $
  are the associated vector fields to $\rhohat$ respectively $\grad\sE(\rho)$.

  \smallskip\noindent{\bf (ii)}
  The Hessian of $\sE$ is the quadratic form
  \begin{equation}
      \sH_\rho(\rhohat)
      :=
      \int_M \Thetahat \wedge
      \rhohat
      + \int_M \left( \iota(X)\rhohat - \iota(\nabla_X X)\rho
      \right)\wedge *^\rho \iota(X_{\grad\sE})\rho.
  \end{equation}

  \smallskip\noindent{\bf (iii)}
  In the hyperK\"ahler case the Hessian of $\sE$ is given by
  \begin{equation}
    \sH_\rho(\rhohat) = \int_M \sum_i\left( \Hhat_i^2 \dvol_\rho + \om_i
    (X,\nabla_{X_{K_i}}X)\right)\dvol_\rho,
  \end{equation}
  where
  $
    \Hhat_i := \frac{(d\iota(X)\om_i) \wedge \rho}{\dvol_\rho}
  $
  and
  $
    K_i := \frac{\om_i \wedge \rho}{\dvol_\rho}.
  $
\end{theorem}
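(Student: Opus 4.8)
The plan is to reduce parts (i) and (ii) to the connection formula of Lemma~\ref{lem:levicivita2} and carry out the required first variations, and to obtain part (iii) by specialising part (ii).

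\emph{Part (i).} I would apply Lemma~\ref{lem:levicivita2} to the path $\sigmahat_t:=\grad\sE(\rho_t)$, choosing $Y_t:=X_{\grad\sE}(\rho_t)$ to be the associated vector field of $\grad\sE(\rho_t)$ with respect to $\rho_t$, so that $\sigmahat_t=-d\iota(Y_t)\rho_t$ and $Y_0=X_{\grad\sE}$ (the exactness of $*^{\rho_t}\iota(Y_t)\rho_t$ holds since $*^\rho\iota(X_{\grad\sE})\rho=-d\Theta^\rho$ by Lemma~\ref{lem:XgradE}). The lemma gives at once
$$\cH_\rho\rhohat=\nabla^\rho_\rhohat\grad\sE(\rho)=-d\iota(\widehat Y+\nabla_X X_{\grad\sE})\rho,\qquad \widehat Y=\left.\tfrac{d}{dt}\right|_{t=0}X_{\grad\sE}(\rho_t),$$
which already produces the third term of~\eqref{def:hessop}. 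To evaluate $-d\iota(\widehat Y)\rho$ I would differentiate the defining relation $\iota(Y_t)\rho_t=*^{\rho_t}d\Theta^{\rho_t}$ at $t=0$, splitting the right-hand side into the variation of $\Theta$ and the variation of the Hodge operator:
$$\iota(\widehat Y)\rho+\iota(X_{\grad\sE})\rhohat=*^\rho d\Thetahat+\bigl(\left.\tfrac{d}{dt}\right|_{t=0}*^{\rho_t}\bigr)d\Theta^\rho,\qquad \Thetahat:=\left.\tfrac{d}{dt}\right|_{t=0}\Theta^{\rho_t}.$$

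For the Hodge-variation term I would keep the $3$-form $d\Theta^\rho$ fixed and write it as $d\Theta^\rho=\rho_t\wedge\iota(W_t)g$ for the unique smooth path $W_t$ with $W_0=X_{\grad\sE}$, using that $V\mapsto\rho_t\wedge\iota(V)g$ is an isomorphism onto $\Om^3(M)$. Then $*^{\rho_t}(\rho_t\wedge\iota(W_t)g)=\iota(W_t)\rho_t$ — the inverse of the identity $*^\rho\iota(V)\rho=-\rho\wedge\iota(V)g$ of Proposition~\ref{prop:grho}(ii), using $(*^\rho)^2=-1$ on $1$-forms — so differentiating gives $\bigl(\left.\tfrac{d}{dt}\right|_{t=0}*^{\rho_t}\bigr)d\Theta^\rho=\iota(\widehat W)\rho+\iota(X_{\grad\sE})\rhohat$, while differentiating $d\Theta^\rho=\rho_t\wedge\iota(W_t)g$ gives $\rho\wedge\iota(\widehat W)g=-\rhohat\wedge\iota(X_{\grad\sE})g$ and hence $\iota(\widehat W)\rho=-*^\rho(\rhohat\wedge\iota(X_{\grad\sE})g)$. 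Substituting, the two copies of $\iota(X_{\grad\sE})\rhohat$ cancel and I obtain $-d\iota(\widehat Y)\rho=-d*^\rho d\Thetahat+d*^\rho(\rhohat\wedge\iota(X_{\grad\sE})g)$, which is exactly~\eqref{def:hessop}. The one genuinely substantial computation is to verify $\left.\tfrac{d}{dt}\right|_{t=0}\Theta^{\rho_t}=\frac{\rhohat+*^\rho\rhohat}{u}-\Abs{\frac{\rho^+}{u}}^2\rhohat$: here I would differentiate $\Theta^\rho=\frac{*\rho}{u}-\frac12\Abs{\frac{\rho}{u}}^2\rho$ directly, using $\dot u\,\dvol=\rhohat\wedge\rho$ and the self-dual/anti-self-dual splitting $\rho=\rho^++\rho^-$, and then convert the background Hodge-star terms into $*^\rho$ via the formula $*^\rho\om=R^\rho*R^\rho\om$ of Proposition~\ref{prop:grho}(iii). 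I expect this to be the main obstacle, as it is the only step requiring the explicit first variation of the family of metrics $g^\rho$.

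\emph{Part (ii).} I would pair~\eqref{def:hessop} with $\rhohat=-d\iota(X)\rho$ in the Donaldson metric, placing $\rhohat$ in the slot carrying the constraint $*^\rho\iota(X)\rho\in\im d$ so that any primitive may be used in the other slot. The first term gives $\inner{-d*^\rho d\Thetahat}{\rhohat}_\rho=\int_M\Thetahat\wedge\rhohat$ after an integration by parts together with the symmetry of the $g^\rho$-pairing and $*^\rho\iota(X)\rho=-\rho\wedge\iota(X)g$. For the remaining two terms I would use the adjointness relation $\inner{-d\iota(V)\rho}{-d\iota(W)\rho}_\rho=\int_M g(V,W)\dvol_\rho$ (proved in the excerpt) together with the Leibniz rule $Xg(X_{\grad\sE},X)=g(\nabla_X X_{\grad\sE},X)+g(X_{\grad\sE},\nabla_X X)$: the term $-d\iota(\nabla_X X_{\grad\sE})\rho$ contributes $\int_M g(\nabla_X X_{\grad\sE},X)\dvol_\rho$, whose $-g(X_{\grad\sE},\nabla_X X)$ part is precisely the $-\iota(\nabla_X X)\rho$ summand of the target, while its divergence part combines — via the Leibniz rule for the interior product — with the middle term $d*^\rho(\rhohat\wedge\iota(X_{\grad\sE})g)$ to yield $\int_M\iota(X)\rhohat\wedge*^\rho\iota(X_{\grad\sE})\rho$.

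\emph{Part (iii).} I would substitute $X_{\grad\sE}=-\sum_i J_i X_{K_i}$ from Lemma~\ref{lem:XgradE} into the two integrals of part (ii). Using the hyperK\"ahler relations $\om_i=g(J_i\cdot,\cdot)$ together with the definitions $K_i=\frac{\om_i\wedge\rho}{\dvol_\rho}$ and $\Hhat_i=\frac{(d\iota(X)\om_i)\wedge\rho}{\dvol_\rho}$, the first integral $\int_M\Thetahat\wedge\rhohat$ reduces to the $\sum_i\Hhat_i^2$ contribution and the second integral to the $\sum_i\om_i(X,\nabla_{X_{K_i}}X)$ contribution, giving the stated formula.
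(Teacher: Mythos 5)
Your parts (i) and (ii) are correct and follow essentially the paper's own route. For (i), the paper differentiates the second identity of Lemma~\ref{lem:XgradE}, $d\Theta^{\rho_t}=\rho_t\wedge\iota(X_{\grad\sE}(\rho_t))g$, and then applies $*^\rho$; you instead differentiate the first identity and handle the variation of $*^{\rho_t}$ through the auxiliary path $W_t$ --- after the cancellation of the two copies of $\iota(X_{\grad\sE})\rhohat$ this is the same computation, and your conclusion $\iota(\Yhat)\rho=*^\rho d\Thetahat-*^\rho(\rhohat\wedge\iota(X_{\grad\sE})g)$ agrees with the paper's. Your part (ii) (pairing with $\rhohat=-d\iota(X)\rho$, splitting off $\int_M\Thetahat\wedge\rhohat$, and combining the $\nabla_XX_{\grad\sE}$-term with the middle term via the Leibniz rule) is also exactly the paper's argument. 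One caveat: the identity $\left.\tfrac{d}{dt}\right|_{t=0}\Theta^{\rho_t}=\frac{\rhohat+*^\rho\rhohat}{u}-\Abs{\frac{\rho^+}{u}}^2\rhohat$, which you rightly flag as the main obstacle, is only sketched in your plan; the paper does not prove it either but cites~\cite{KROMSAL}, so this is acceptable provided that computation (or the citation) is actually supplied.

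Part (iii) contains a genuine gap: the claimed term-by-term reduction is false. In the hyperK\"ahler case one has (as quoted in the paper from~\cite{KROMSAL})
$$
\int_M\Thetahat\wedge\rhohat
=\int_M\sum_i\Big(\Khat_i^2\,\dvol_\rho-\tfrac12 K_i^2\,\rhohat\wedge\rhohat\Big),
\qquad
\Khat_i=\frac{\om_i^\rho\wedge\rhohat}{\dvol_\rho},
$$
and $\Khat_i$ is \emph{not} equal to $\Hhat_i$: using $d\om_i=0$, $\rhohat=-\cL_X\rho$ and $\om_i\wedge\rho=K_i\dvol_\rho$ one finds $\Hhat_i=\Khat_i+\iota(X)dK_i$, so the first integral does not by itself reduce to $\int_M\sum_i\Hhat_i^2\dvol_\rho$. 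Likewise, after substituting $X_{\grad\sE}=-\sum_iJ_iX_{K_i}$, the second integral of part (ii) produces the terms $\int_M\sum_i\iota(X_{K_i})\om_i\wedge\iota(X)\rho\wedge\rhohat$ and $\int_M\sum_i\om_i(X,\nabla_XX_{K_i})\dvol_\rho$; since $\nabla_XX_{K_i}-\nabla_{X_{K_i}}X=[X,X_{K_i}]$ does not vanish in general, this is not $\int_M\sum_i\om_i(X,\nabla_{X_{K_i}}X)\dvol_\rho$ either. The substance of (iii) is precisely that all these cross terms --- the $\iota(X)dK_i$ contributions, the $-\tfrac12K_i^2\,\rhohat\wedge\rhohat$ term, the wedge term, and the Lie-bracket discrepancy --- cancel against one another; the paper delegates this nontrivial rearrangement to Lemma~4.3 of~\cite{KROMSAL}. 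Your sketch asserts the conclusion of that rearrangement without providing it, and an attempt to match the two integrals summand by summand, as you propose, would fail.
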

\begin{proof}
  We prove (i). Let $X$ and $X_{\grad\sE}$ be the associated vectorfields
  of $\rhohat$ and $\grad\sE$. Let $\rho_t:\R \to \sS_a$ be a path of symplectic
  forms such that $\left. \frac{d}{dt}\right|_{t=0}\rho_t = \rhohat$. By
  Lemma~\ref{lem:levicivita2}
  $$
  \nabla_{\rhohat} \grad \sE(\rho) =  -\iota(\Xhat_{\grad \sE} + \nabla_X X_{\grad
  \sE})\rho
  $$
  where $\Xhat_{\grad \sE} = \left. \frac{d}{dt}\right|_{t=0}X_{\grad \sE}$.
  By Lemma~\ref{lem:XgradE} we have $d\Theta^\rho = \rho\wedge \iota\left(
  X_{\grad\sE} \right)g$ and hence
  $$
  d\Thetahat = \rhohat \wedge
  \iota(X_{\grad\sE})g + \rho \wedge \iota\left( \Xhat_{\grad\sE} \right)g,
  $$
  where
  $
  \Thetahat = \left.\frac{d}{dt}\right|_{t=0}\Theta^{\rho_t}.
  $
  It follows that
  \begin{equation*}
      \iota(\Xhat_{\grad\sE})\rho
      =
      *^\rho\left( \rho \wedge \iota\left( \Xhat_{\grad\sE} \right)g \right)\\
      =
      *^\rho d \Thetahat -
      *^\rho\left( \rhohat \wedge\iota\left( X_{\grad\sE} \right)g \right).
  \end{equation*}
  Hence,
  \begin{equation*}
      \nabla_\rhohat \grad\sE\left( \rho \right)
      =
      - d *^\rho d
      \Thetahat
      + d*^\rho\left( \rhohat \wedge \iota(X_{\grad\sE})g \right) -
      d\iota\left( \nabla_X X_{\grad\sE} \right)\rho.
  \end{equation*}
  That
  $
  \Thetahat = \frac{\rhohat + *^\rho \rhohat}{u} -
  \Abs{\frac{\rho^+}{u}}\rhohat
  $
  is proved in \cite{KROMSAL}. This proves (i).

  We prove (ii). By part (i)
  \begin{equation*}
    \begin{split}
      \sH_\rho(\rhohat)
      &=
      \inner{\cH_\rho(\rhohat)}{\rhohat}_\rho\\
      &=
      \inner{-d *^\rho
        d\left( \frac{\rhohat + *^\rho \rhohat}{u} - \Abs{\frac{\rho^+}{u}}^2
      \rhohat \right)}{\rhohat}_\rho\\
      &\qquad
      + \inner{d*^\rho\left( \rhohat \wedge
        \iota(X_{\grad\sE})g \right)}{\rhohat}_\rho  + \inner {-
          d\iota\left( \nabla_X X_{\grad\sE} \right)\rho}{\rhohat}_\rho\\
      &=:
      A + B + C.
    \end{split}
  \end{equation*}
  By the definition of the Donaldson metric
  \begin{equation*}
    \begin{split}
    A
    &=
    \int_M\left( *^\rho d\left( \frac{\rhohat + *^\rho \rhohat}{u} - \Abs{\frac{\rho^+}{u}}^2
      \rhohat \right) \right)\wedge *^\rho \iota(X) \rho\\
    &=
    \int_M\left( \frac{\rhohat + *^\rho \rhohat}{u} - \Abs{\frac{\rho^+}{u}}^2
      \rhohat \right) \wedge \left(- d\iota(X) \rho\right)\\
    &=
    \int_M\left( \frac{\rhohat + *^\rho \rhohat}{u} - \Abs{\frac{\rho^+}{u}}^2
      \rhohat \right) \wedge \rhohat.
    \end{split}
  \end{equation*}
  Likewise,
  \begin{equation*}
    \begin{split}
      B
      &=
      -\int_M\rhohat\wedge \left(\iota(X_{\grad\sE})g\right) \wedge \iota(X) \rho\\
      &=
      -\int_M \left(\iota(X) \rhohat\right) \wedge \left(\iota\left( X_{\grad\sE}
      \right)g\right) \wedge \rho - \int_M g(X_{\grad\sE}, X)\, \rhohat \wedge
      \rho\\
      &=
      \int_M \left(\iota(X) \rhohat\right) \wedge *^\rho \left(\iota\left( X_{\grad\sE}
      \right)\rho\right) - \int_M g(X_{\grad\sE}, X)\, \rhohat \wedge
      \rho.
    \end{split}
  \end{equation*}
  Since $\inner{-d\iota(X)\rho}{-d\iota(Y)\rho}_\rho = \int_M (\iota(X) \rho)
  \wedge *^\rho \iota(Y) \rho = \int_M g(X,Y) \dvol_\rho$
  for $X$ associated to $\rhohat$ and $Y$ an arbitrary vector field we have
  \begin{equation*}
    \begin{split}
      C
      &=
      \int_M g(\nabla_X X_{\grad\sE}, X) \dvol_\rho\\
      &=
      \int_M\left(\iota(X)d g(X_{\grad\sE},X) - g(X_{\grad\sE}, \nabla_X X)
      \right)\dvol_\rho\\
      &=
      \int_M d g(X_{\grad\sE},X) \wedge \left(\iota(X) \rho\right) \wedge \rho  - \int_M g(X_{\grad\sE}, \nabla_X X)
      \dvol_\rho\\
      &=
      \int_M g(X_{\grad\sE},X)  \, \rhohat \wedge \rho   - \int_M \iota(\nabla_X
      X)\rho \wedge *^\rho \iota(X_{\grad\sE})\rho.
    \end{split}
  \end{equation*}
  Hence,
  \begin{equation*}
    \begin{split}
        A + B + C &=
            \int_M \left( \frac{\rhohat + *^\rho
            \rhohat}{u} - \Abs{\frac{\rho^+}{u}}^2 \rhohat \right) \wedge
            \rhohat\\
            &\qquad
            + \int_M \left( \iota(X)\rhohat - \iota(\nabla_X X)\rho
            \right)\wedge *^\rho \iota(X_{\grad\sE})\rho.
    \end{split}
  \end{equation*}
  This proves (iii).

  We prove (iii). Assume the hyperK\"ahler case. The following identities are proved in
  $\cite{KROMSAL}$,
  \begin{align*}
    \grad\sE(\rho)
    &=
    d \sum_i^{3} dK_i \circ J_i^\rho\\
    \Thetahat
    &= \sum_i^3 \Khat_i^2\om_i^\rho - \frac{1}{2}\sum_i^3K_i^2 \rhohat
    \\
    \int_M \Thetahat \wedge \rhohat
    &=
    \int_M \sum_i \left( \Khat_i^2 \dvol_\rho
    -\frac{1}{2 } K_i^2 \rhohat \wedge \rhohat\right),
  \end{align*}
  where $\Khat_i = \frac{\om_i^\rho \wedge \rhohat }{\dvol_\rho}$,
  $\rho(J_i^\rho\cdot, \cdot) := \rho(\cdot, J_i \cdot)$ and $\om_i^\rho =
  \om_i - K_i \rho$. From (i) we have
  \begin{align*}
    \sH_\rho(\rhohat)
    &=
    \int_M \Thetahat \wedge \rhohat - \int_M \rhohat \wedge
    \iota\left( X_{\grad\sE} \right)g \wedge\iota(X) \rho\\
    &\qquad + \int_M g\left(
    \nabla_X X_{\grad\sE}, X \right) \dvol_\rho\\
    &=: \int_M \Thetahat \wedge \rhohat + D + E.
  \end{align*}
  By Lemma~\ref{lem:XgradE} we have $X_{\grad\sE} = - \sum_i J_i X_{K_i}$.
  Therefore
  \begin{equation*}
    D
    =
    - \int_M \rhohat \wedge
    \iota\left( X_{\grad\sE} \right)g \wedge\iota(X) \rho
    =
    \int_M \sum_i \iota\left( X_{K_i} \right)\om_i\wedge \iota\left( X
    \right)\rho \wedge \rhohat
  \end{equation*}
  and
  \begin{equation*}
    E
    =
    \int_M g\left(
    \nabla_X X_{\grad\sE}, X \right) \dvol_\rho
    =
    \int_M \sum_i \om_i\left(X, \nabla_X X_{K_i}\right) \dvol_\rho.
  \end{equation*}
  It now follows from Lemma~4.3 in \cite{KROMSAL} that
  \begin{align*}
    \sH_\rho (\rhohat)
    &=
    \int_M \sum_i \left( \Khat_i^2 \dvol_\rho
      -\frac{1}{2 } K_i^2 \rhohat \wedge \rhohat\right) + D + E\\
    &=
    \int_M \sum_i \left( \Hhat_i^2\dvol_\rho + \om_i\left( X,
    \nabla_{X_{K_i}}X \right) \right)\dvol_\rho.
  \end{align*}
  This proves (iii) and the theorem.
\end{proof}

\begin{remark}
  The Hessian operator $\cH: T_\rho \sS_a \to T_\rho \sS_a$ given
  by~\eqref{def:hessop} is a non-local differential operator of degree two. It
  is non-local because of the last term $d \iota\left(\nabla_X
  X_{\grad\sE}\right)\rho$, which involves solving the equation
  $$
  -d \iota(X) \rho = \rhohat, \qquad *^\rho \iota(X) \rho \in \im d
  $$
  for the associated vector field $X$ of $\rhohat$. Its leading term
  \begin{equation*}
    \begin{split}
      - d*^\rho d \Thetahat
      &=
      -d *^\rho d \left( \frac{\rhohat + *^\rho
      \rhohat}{u} - \Abs{\frac{\rho^+}{u}}^2 \rhohat \right)\\
      &= -2d\frac{*^\rho}{u} d
      \rhohat^{+^\rho} + d *^\rho \left(
      \frac{d u}{u^2} \wedge (\rhohat + *^\rho \rhohat)
      \right) + d *^\rho \left(d \Abs{\frac{\rho^+}{u}}^2 \wedge
      \rhohat\right)\\
      &=
      (d^{*^\rho} \frac{1}{u}d + d\frac{1}{u}d^{*\rho})\rhohat + d *^\rho \left(
      \frac{d u}{u^2} \wedge (\rhohat + *^\rho \rhohat)
      \right)
      \\
      &\qquad \qquad
      + d *^\rho \left(d \Abs{\frac{\rho^+}{u}}^2 \wedge
      \rhohat\right)
    \end{split}
  \end{equation*}
  is an elliptic differential operator.
\end{remark}



\end{document}